\newcommand{\Q}{\mathbb Q}
\newtheorem{thm}{Theorem}
\title{Rational Quartic Reciprocity}
\author{Franz Lemmermeyer}
\address{M\"orikeweg 1, 73489 Jagstzell, Germany}
\email{hb3@ix.urz.uni-heidelberg.de}
\begin{document}

\begin{abstract}
 We provide a simple proof of the general rational quartic reciprocity
 law due to Williams, Hardy and Friesen.
\end{abstract}

\maketitle

In 1985, K. S. Williams, K. Hardy and C. Friesen \cite{11} published 
a reciprocity formula that comprised all known rational quartic 
reciprocity laws. Their proof consisted in a long and complicated 
manipulation of Jacobi symbols and was subsequently simplified (and
generalized) by R. Evans \cite{3}. In this note we will give a proof
of their reciprocity law which is not only considerably shorter but
which also sheds some light on the raison d'\^etre of rational
quartic reciprocity laws. For a survey on rational reciprocity laws,
see E. Lehmer \cite{7}.

We want to prove the following

\begin{thm}
Let $m \equiv 1 \bmod 4$ be a prime, and let $A, B, C$ be integers
such that
$$ \begin{array}{rl} A^2=m(B^2+C^2),    \qquad & 2 \mid B, \\
   (A,B) = (B,C) = (C,A) = 1,\qquad & A+B \equiv 1 \bmod 4.
\end{array} $$
Then, for every odd prime $p>0$ such that $(m/p) = + 1$, 
\begin{equation}\label{E1}
\left(\frac{A+B\sqrt m}{p}\right) =  \left(\frac{p}{m} \right)_4 . 
\end{equation}
\end{thm}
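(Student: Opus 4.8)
The plan is to interpret both sides of \eqref{E1} as splitting conditions for $p$ in one and the same cyclic quartic field, and then to identify that field explicitly. First I would record two elementary facts. Since $A^2 = m(B^2+C^2)$ and $m$ is prime, $m \mid A$; writing $A = mA_1$ gives the factorisation $A + B\sqrt m = \sqrt m\,(B + A_1\sqrt m)$ and the norm $N(A+B\sqrt m) = A^2 - mB^2 = mC^2$. In particular $(A+B\sqrt m)(A-B\sqrt m) = mC^2$, so reducing modulo $p$ (where $(m/p)=1$ lets us fix a square root of $m$ in $\mathbb{F}_p$) the two symbols $\left(\frac{A\pm B\sqrt m}{p}\right)$ agree. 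Hence the left-hand side of \eqref{E1} does not depend on the chosen square root and is a genuine invariant of $p$.

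Next I would set $K = \Q(\sqrt{A+B\sqrt m})$ and show it is cyclic of degree $4$ over $\Q$. Writing $\theta = \sqrt{A+B\sqrt m}$, it contains $\sqrt m = (\theta^2-A)/B$, so $k := \Q(\sqrt m) \subset K$; the conjugate $\sqrt{A-B\sqrt m}$ equals $C\sqrt m/\theta \in K$, so $K/\Q$ is Galois of degree $4$. A short check on the automorphism $\sigma:\sqrt m \mapsto -\sqrt m$, using $\theta\cdot\overline{\theta} = C\sqrt m$ and $\sigma(C\sqrt m) = -C\sqrt m$, shows $\sigma^2(\theta) = -\theta$, so $\mathrm{Gal}(K/\Q)$ is cyclic.

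The heart of the argument is to prove that $K/\Q$ is ramified only at $m$. Away from $2$ this follows from the hypotheses: writing $\alpha = A+B\sqrt m = \sqrt m\,\beta$ with $\beta = B+A_1\sqrt m$ and $N\beta = -C^2$, the coprimality relations force $\beta,\overline\beta$ to be coprime outside $2$ and $\sqrt m$; no inert prime can divide $C$ (for $q\mid C$ gives $m\equiv(A/B)^2$, i.e. $(m/q)=+1$); and for each split $\mathfrak q\mid C$ the identity $v_{\mathfrak q}(\beta)+v_{\mathfrak q}(\overline\beta) = 2v_q(C)$ with $\min = 0$ forces $v_{\mathfrak q}(\alpha)$ even. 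Thus $k(\sqrt\alpha)/k$ is unramified at every odd prime except $\sqrt m$, at which $v_{\sqrt m}(\alpha)=1$. I expect the main obstacle to be the behaviour at $2$: here one must use $2\mid B$ together with $A+B\equiv 1\bmod 4$ (and $m\equiv1\bmod4$) to verify that $\alpha$ is congruent to a square modulo $4$ in $\mathcal O_k$, so that $K/\Q$ is unramified at $2$ as well. Granting this, $K/\Q$ is an abelian quartic extension ramified only at the odd prime $m$, hence tamely ramified there with conductor exactly $m$; therefore $K\subseteq\Q(\zeta_m)$, and $K$ must be the unique cyclic quartic subfield $F$ of $\Q(\zeta_m)$.

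Finally I would read off \eqref{E1} from the identification $K=F$. Because $m\equiv1\bmod4$, the quadratic subfield of $F$ is $k=\Q(\sqrt m)$, so the hypothesis $(m/p)=1$ says exactly that $p$ splits in $k$. On one hand, a prime $\mathfrak p\mid p$ of $k$ splits in $K=k(\sqrt\alpha)$ iff $\alpha\equiv A+B\sqrt m$ is a square in $\mathbb{F}_p$, that is, iff $\left(\frac{A+B\sqrt m}{p}\right)=+1$; on the other hand $p$ splits completely in the cyclic quartic field $F$ iff $p$ is a fourth power modulo $m$, that is, iff $\left(\frac pm\right)_4=+1$. Since $K=F$ these two conditions coincide, and as both symbols take values in $\{\pm1\}$ this yields \eqref{E1}.
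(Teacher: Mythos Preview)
Your proposal is correct and follows the same global strategy as the paper: prove that $K=k(\sqrt{A+B\sqrt m})$ is the unique cyclic quartic subfield of $\Q(\zeta_m)$ by checking it is unramified outside $m$, and then read off \eqref{E1} by comparing the splitting of $p$ in $K$ with its splitting in $\Q(\zeta_m)$. Your extra remark that $\left(\frac{A+B\sqrt m}{p}\right)=\left(\frac{A-B\sqrt m}{p}\right)$ (independence of the chosen square root of $m$) is a nice addition not made explicit in the paper.

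The one substantive difference is the treatment of ramification at odd primes $\ne m$. You argue valuation-theoretically: writing $\alpha=\sqrt m\,\beta$ with $N\beta=-C^2$ and $\beta,\overline\beta$ coprime away from $2$ and $\sqrt m$, every split $\mathfrak q\mid C$ satisfies $v_{\mathfrak q}(\beta)\in\{0,2v_q(C)\}$, hence $v_{\mathfrak q}(\alpha)$ is even. The paper instead uses the identity
\[
2(A+B\sqrt m)(A+C\sqrt m)=(A+B\sqrt m+C\sqrt m)^2,
\]
so that $K=k\bigl(\sqrt{2(A+C\sqrt m)}\bigr)$ as well; an odd ramified prime would then have to divide both norms $A^2-mB^2=mC^2$ and $A^2-mC^2=mB^2$, and $(B,C)=1$ leaves only $m$. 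Your argument is more systematic; the paper's identity is slicker and has the bonus of being reused later to pass between \eqref{E1} and the Williams--Hardy--Friesen version with $B$ odd. At the prime $2$ the paper carries out exactly what you anticipate: since $m\equiv1\bmod4$ one has $\sqrt m\equiv1\bmod2$ in $\mathcal O_k$, hence $B\sqrt m\equiv B\bmod4$ (as $2\mid B$) and $A+B\sqrt m\equiv A+B\equiv1\bmod4$, so $2$ is unramified --- so your ``granting this'' is easily discharged.
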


\begin{proof}
Let $k = \Q(\sqrt{m}\,)$; then $K = \Q(\sqrt{m}, \sqrt{A+B\sqrt{m}}\,)$
is a quartic cyclic extension of $\Q$ containing $k$, as can be verified 
quickly by noting that 
$$ A^2-mB^2 = mC^2 = (\sqrt{m}C)^2 \quad \text{ and } 
   \quad \sqrt{m}C \in k \setminus \Q.  $$
We claim that $K$ is the quartic subfield of $\Q(\zeta_m)$, the field
of $m$th roots of unity. This will follow from the theorem of Kronecker 
and Weber once we have seen that no prime $\ne m$ is ramified in $K/\Q$. 
But the identity
\begin{equation}\label{E2}
 2(A+B\sqrt{m})(A+C\sqrt{m}) = (A+B\sqrt{m}+C\sqrt{m})^2 \end{equation}
shows that $K = k\left(\sqrt{2(A+C\sqrt{m})}\right)$, and so the only
odd primes that are possibly ramified in  $K/k$ are common divisors of
$A^2-mB^2 = mC^2$ and $A^2-mC^2 = mB^2$.  Since $B$ and $C$ are assumed
to be prime to each other, only $2$ and $m$ can ramify. Now 
$\sqrt{m} \equiv 1 \bmod 2$ (since $m \equiv 1 \bmod 4$) implies
$B\sqrt{m} \equiv B \bmod 4$, and we see $A+B\sqrt{m} \equiv A+B 
\equiv 1 \bmod 4$, which shows that $2$ is unramified in $K/k$ (and
therefore also in $K/\Q$).

The reciprocity formula will follow by comparing the decomposition laws
in $K/\Q$ and $\Q(\zeta_m)/\Q$: if $(m/p) = +1$, then $p$ splits in
$k/\Q$; if $f > 0$ is the smallest natural number such that
$p^f \equiv 1 \bmod m$ (here we have to assume that $p>0$), then
$p$ splits into exactly $g = (m-1)/f$ prime ideals in $\Q(\zeta_m)$,
and we see
$$ \begin{array}{rl} \left(\frac{p}{m} \right)_4 = 1 
    & \iff  p^{(m-1)/4} \equiv 1 \bmod m \\
    & \iff  f \text{ divides } \frac14(m-1) = \frac14fg \\
    & \iff  g \equiv 0 \bmod 4 \\
    & \iff  \text{the degree of the decomposition field } Z \text{ of } p 
            \text{ is divisible by 4} \\
    & \iff Z \text{ contains } K \text { (because }
             Gal(\Q(\zeta_m)/\Q) \text { is cyclic)} \\
    & \iff  p \text{ splits completely in } K/\Q \\
    & \iff  p \text{ splits completely in } K/k \text{ (since } p
              \text{ splits completely in } k/\Q \text{)} \\
    & \iff \left(\frac{A+B\sqrt m}{p}\right) = 1. 
\end{array} $$
 
  This completes the proof of the theorem. 
\end{proof}

Letting $m=2$ and replacing the quartic subfield of $\Q(\zeta_m)$
used above by the cyclic extension $\Q\left(\sqrt{2+\sqrt{2}}\,\right)$
contained in $\Q(\zeta_{16})$ yields the equivalence
\begin{equation}\label{E3}
 \left(\frac{A+B\sqrt 2}{p}\right) = 1 \iff p \text{ splits in }
   \Q\left(\sqrt{2+\sqrt{2}}\,\right) \iff p \equiv \pm 1 \bmod 16, 
\end{equation}
stated in a slightly different way in \cite{11}.

Formula (\ref{E1}) differs from the one given in \cite{11}, which reads 
\begin{equation}\label{E4} 
\left(\frac{A+B\sqrt m}{p}\right) = 
   (-1)^{(p-1)(m-1)/8}\left(\frac{2}{p}\right)
                  \left(\frac{p}{m}\right)_4, \end{equation}
where $A, B, C > 0,\, B$ is odd, and $C$ is even. Formula (2) shows that
$$ \left(\frac{A+B\sqrt m}{p}\right) = \left(\frac{2}{p}\right)
   \left(\frac{A+C\sqrt m}{p}\right), $$
and so, for $B$ even and $C$ odd, (4) is equivalent to
 \begin{equation}\label{E5}
\left(\frac{A+B\sqrt m}{p}\right) = (-1)^{(p-1)(m-1)/8}
        \left(\frac{p}{m}\right)_4. \end{equation}

Now $A \equiv 1 \bmod 4$ since $A^2 = m(B^2+C^2)$ is the product of
$m \equiv 1 \bmod 4$ and of a sum of two relatively prime squares,
and we have $A+B \equiv 1 \bmod 4 \iff {4 \mid B} \iff m \equiv 1 \bmod 8.$
The sign of $B$ is irrelevant, therefore
$$\left(\frac{-1}{p}\right)^{B/2} = (-1)^{(p-1)(m-1)/8}.$$
This finally shows that (\ref{E1}) is in fact equivalent to (\ref{E4}).

Another version of (\ref{E1}) which follows directly from (\ref{E5}) is
\begin{equation}\label{E6}
\left(\frac{A+B\sqrt m}{p}\right) = 
  \left(\frac{p^*}{m}\right)_4, \end{equation}
where $A, B > 0$ and $p^* = (-1)^{p-1)/2}p$.

Formula (\ref{E1}) can be extended to composite values of $m$ (where the
prime factors of $m$ satisfy certain conditions given in \cite{11})
in very much the same way as Jacobi extended the quadratic reciprocity
law of Gauss; this extension, however, is not needed for deriving the
known rational reciprocity laws of K. Burde \cite{1}, E. Lehmer 
\cite{6,7} and A. Scholz \cite{9}. These follow from (\ref{E1}) by assigning
special values to $A$ and $B$, in other words: they all stem from the
observation that the quartic subfield $K$ of  $\Q(\zeta_m)$ can be
generated by different square roots over $k=\Q(\sqrt{m}\,)$.

The fact that (\ref{E1}) is valid for primes $p \mid ABC$ (which 
has not been proved in \cite{11}) shows that we no longer have to 
exclude the primes $q \mid ab$ in Lehmer's criterion (as was 
necessary in \cite{11}), and it allows us to derive Burde's 
reciprocity law in a more direct way: let $p$ and $q$ be primes 
$\equiv 1 \bmod 4$ such that $p = a^2+b^2$, $q=c^2+d^2$,
$2\mid b$, $2\mid d$, $(p/q) = +1$, and define
$$ A = pq, \quad B = b(c^2-d^2)+2acd, \quad C = a(c^2-d^2)-2bcd, \quad
   m = q.$$
Then $2 \mid B,\, B \equiv 2d(ac+bd) \bmod q$ (since $c^2 \equiv 
-d^2 \bmod q$), the sign of $A$ does not matter (since $q \equiv 
1 \bmod 4$), and so formula (\ref{E1}) yields
$$ \left(\frac{q}{p}\right)_4 = \left(\frac{A+B\sqrt p}{q}\right) 
   = \left(\frac{B}{q}\right)\left(\frac{p}{q}\right)_4. $$
Now the well known $(\frac{2d}{q}) = +1$ implies Burde's law
\begin{equation}\label{E7} 
\left(\frac{p}{q}\right)_4 \left(\frac{q}{p}\right)_4 
  = \left(\frac{ac-bd}{q}\right).
\end{equation}

A rational reciprocity law equivalent to Burde's has already been
found by T. Gosset \cite{5}, who showed that, for primes $p$ and $q$
as above,
\begin{equation}\label{E8} 
\left(\frac{q}{p}\right)_4 \equiv 
  \left(\frac{a/b-c/d}{a/b+c/d}\right)^{(q-1)/4} \bmod q. 
\end{equation}
Multiplying the numerator and denominator of the term on the
right side of (8) by $a/b+c/d$ and observing that $c^2/d^2 \equiv 
-1 \bmod q$ yields
\begin{align*}
 \Big(\frac{q}{p}\Big)_4 & \equiv 
 \Big(\frac{a^2/b^2+1}{q}\Big)_4\Big(\frac{a/b+c/d}{q}\Big)
 = \Big(\frac{p}{q}\Big)_4 \Big(\frac{b}{q}\Big)
    \Big(\frac{a/b+c/d}{q}\Big) \\
 & = \Big(\frac{p}{q}\Big)_4 \Big(\frac{a+bc/d}{q}\Big)
   = \Big(\frac{p}{q}\Big)_4 \Big(\frac{d}{q}\Big)
   \Big(\frac{ad+bc}{q}\Big) \bmod q, 
\end{align*}
which is Burde's reciprocity law since $(\frac{2d}{q}) = +1$.

A more explicit form of Burde's reciprocity law for composite values
of $p$ and $q$ has been given by L. R\'edei \cite{8}; letting
$n = pq = A^2+B^2$ in \cite[(17)]{8}, we find
$A = ac-bd, \, B = ad+bc,$ and his reciprocity formula 
\cite[(23)]{8} gives (7).

Yet another version of Burde's law is due to A. Fr\"ohlich \cite{4};
he showed
\begin{equation}\label{E9}
 \left(\frac{p}{q}\right)_4 \left(\frac{q}{p}\right)_4 
  =\left(\frac{a+bj}{q}\right) = \left(\frac{c+di}{p}\right),
\end{equation}
where $i$ and $j$ denote rational numbers such that $i^2 \equiv -1
\bmod p$ and $j^2 \equiv -1 \bmod q$. Letting $i=a/b$ and $j=c/d$
and observing that $(\frac{a}{p}) = (\frac{c}{q}) = +1$,
we find that (9) is equivalent to (7).

The reciprocity theorem of Lehmer \cite{6,7} is even older; it can be 
found in Dirichlet's paper \cite{2} as Th\'eor\`eme I and II;
Dirichlet's ideas are reproduced in the charming book of Venkow  
\cite{10} and may be used to give proofs for other rational 
reciprocity laws using nothing beyond quadratic reciprocity.

\medskip 
\noindent {\bf Remark.} 
The author has recently generalized Scholz's 
reciprocity law to all number fields with odd class number in the
strict sense.

\end{document}